\title{Classification of Randers metric of isotropic projective Ricci curvature}
\author{
  Pejhman Vatandoost-Miandehi $^1$ \\
  Department of Mathematics and Computer Science\\
  Amirkabir University of Technology\\
  Tehran, Iran
   \And
 Masoud Nikokar $^2$
 \\
  Department of Mathematics and Computer Science\\
  Amirkabir University of Technology\\
  Tehran, Iran \\
}
\begin{document}
\maketitle
\newtheorem{theorem}{Theorem}[section]
\newtheorem{lemma}[theorem]{Lemma}
\newtheorem{proposition}[theorem]{Proposition}
\newtheorem{corollary}[theorem]{Corollary}
\newtheorem{question}[theorem]{Question}

\theoremstyle{definition}
\newtheorem{definition}[theorem]{Definition}
\newtheorem{algorithm}[theorem]{Algorithm}
\newtheorem{conclusion}[theorem]{Conclusion}
\newtheorem{problem}[theorem]{Problem}

\theoremstyle{remark}
\newtheorem{remark}[theorem]{Remark}
\numberwithin{equation}{section}
\footnotetext[1]{E-mail:~\texttt{(pejhman.vatandoost@gmail.com)}; \texttt{(pejhman.vatandoost@iran.ir)}}
\footnotetext[2]{E-mail:~\texttt{(m.nikokar@iran.ir)}}
\begin{abstract}
In this paper, the concept of isotropic projective Ricci curvature has been investigated. By classification of Randers metric of isotropic projective Ricci curvature, it is shown that Randers metric of projective Ricci curvature is reversible if and only if it is of square projective Ricci curvature.
\end{abstract}

\keywords{Finsler Geometry, Finsler metric, Randers metric, $S$-curvature, isotropic projective Ricci curvature}

\section{Introduction}
In 2001, Shen introduced the concept of projective Ricci curvature for a Finsler metric as follows \cite{1}
\begin{equation*}\label{1}
PRic:=Ric+(n-1)\{\bar{S}_{|m}y^m+\bar{S}^2\},
\end{equation*}
where, $S$ denotes the curvature $S$ of a non-Riemannian quantity and plays an important role in Finsler geometry, and $Ric$ represents the Ricci curvature \cite{1}. Also, symbol $"|"$ represents the horizontal covariant derivative with respect to the Berwald connection, and
\begin{equation*}
\bar{S}:=\frac{1}{n+1}S.
\end{equation*}	

Cheng et al. \cite{2} expressed an equation for a projective Ricci curvature as follows
\begin{equation*}\label{2}
PRic=Rci+\frac{n-1}{n+1}S_{|m}y^m+\frac{n-1}{(n+1)^2}S^2,
\end{equation*}
and fully classified Randers metrics of a projective Ricci flat curvature \cite{2}. A Finsler metric is said to be from a projective Ricci flat curvature if $PRic=0$.

$(\alpha, \beta)$-metrics form an important and specific group of Finsler metrics that are defined as $F=\alpha \varphi(s)$ where $S=\frac{\beta}{\alpha}$ and   
\begin{equation*}
\alpha=\alpha(x,y)=\sqrt{a_{ij}(x)y^iy^j},
\end{equation*}
is a Riemannian metric and
\begin{equation*}
\beta=\beta(x,y)=b_i(x)y^i,
\end{equation*}
is a $1$-form on $M$ and $\varphi(s)$ is a positive function of class $C^\infty$. Assuming $\varphi(s)=1+s$, then function $F=\alpha+\beta$ which is a Finsler metric given conditions on the $1$-form $\beta$, is called a Randers metric. We Put
\begin{equation*}
r_{ij}:=\frac{1}{2}(b_{i;j}+b_{j;i}),~~~ s_{ij}:=\frac{1}{2}(b_{i;j}-b_{j;i}),
\end{equation*}
where notation ";" represents horizontal covariant derivative with respect to Levi-Civita connection related to metric $\alpha$. We also assume
\begin{equation*}
{r^i}_j:=a^{im}r_{mj},\quad {s^i}_j:=a^{im}s_{mj},\quad r_j:=b^m r_{mj},\quad r:=r_{ij}b^i b^j=b^j r_{j},
\end{equation*}
\begin{equation*}
s_j:=b^m s_{mj},\quad q_{ij}:=r_{im}{s^m}_j,\quad t_{ij}:=s_{im}{s^m}_j,\quad
q_j:=b^i q_{ij}=r_m{s^m}_j,
\end{equation*}
\begin{equation*}
t_j:=b^i t_{ij}=s_m{s^m}_j,\quad b:=\lVert \beta_x\rVert_\alpha,\quad\rho:=\ln\sqrt{1-b^2},\quad\rho_i:=\rho_{x^i},
\end{equation*}
\begin{equation*}
(a^{ij}):=(a_{ij})^{-1},\quad b^i:=a^{ij}b_j,
\end{equation*}
where
\begin{equation*}
r_{i0}:=r_{ij}y^j,\quad s_{i0}:=s_{ij}y^j,\quad
r_{00}:=r_{ij}y^i y^j,\quad
r_0:=r_iy^i,\quad s_0:= s_i y^i,\quad \rho_0:=\rho_i y^i.
\end{equation*}

\begin{definition}\label{def}
	Let $F$ be a Finsler metric on a manifold $M$, and $PRic$ represents the projective Ricci curvature of the metric $F$ with respect to the Busemann-Hausdorff volume form. In this case, $F$ is of an isotropic projective Ricci curvature, if
	\begin{equation*}\label{3}
	PRic=(n-1)cF^2,
	\end{equation*}
	where $c=c(x)$ is a scalar function on $M$. $F$ is said to be of a constant projective Ricci curvature if $c$ is constant. 
\end{definition}

In the following, Randers metrics of an isotropic projective Ricci curvature is classified. In fact, Theorem \ref{thm 1-1} can be expressed and confirmed.
\begin{theorem}\label{thm 1-1}
	Suppose  $F=\alpha+\beta$ is a Randers metric on a manifold $M$. Then, $F$ is of a isotropic projective Ricci curvature is if and only if
	\begin{equation*}\label{4}
	^\propto{Ric}=({t^m}_m+(n-1)c)\alpha^2+2t_{00}+(n-1)[\rho_{0;0}-\rho_0^2+c\beta^2],
	\end{equation*}
	\begin{equation*}\label{5}
	{S^m}_{0;m}=-(n-1)(\rho_m {s^m}_0-c\beta),
	\end{equation*}
	\begin{equation*}\label{6}
	s_0=0,\quad or\quad r_{00}+2\beta s_0=0,
	\end{equation*}
	where symbol  $^\propto{Ric}$ represents the Ricci curvature of a Riemannian metric $\alpha$ and $c=c(x)$ is a scalar function on $M$.
\end{theorem}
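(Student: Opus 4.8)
The plan is to start from the Cheng--Shen expression $PRic=Ric+\frac{n-1}{n+1}S_{|m}y^m+\frac{n-1}{(n+1)^2}S^2$ recorded in the introduction and impose the defining equation of Definition \ref{def}, namely $PRic=(n-1)cF^2=(n-1)c(\alpha+\beta)^2$. Everything on the left must first be rewritten purely in terms of the Riemannian data of $\alpha$ and the torsion tensors $r_{ij},s_{ij}$ by substituting the two standard Randers formulas: the $S$-curvature with respect to the Busemann--Hausdorff volume form, $S=(n+1)\{\frac{r_{00}+2\beta s_0}{2F}-(s_0+\rho_0)\}$, and the known expansion of the Ricci curvature $Ric$ of $F=\alpha+\beta$ in terms of $^\propto{Ric}$ and the tensors $t_{ij},q_{ij},\rho$. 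Writing $\bar S=\frac{1}{n+1}S$, one then needs the two auxiliary quantities $\bar S^2$ and $\bar S_{|m}y^m$; since $F_{|m}=0$ for the Berwald connection, the horizontal derivative hits only the numerator $r_{00}+2\beta s_0$ and the terms $s_0+\rho_0$, which keeps the denominators under control.

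The heart of the argument is a rationality separation. After inserting these formulas, the identity $PRic=(n-1)cF^2$ contains $\alpha=\sqrt{a_{ij}y^iy^j}$ only through the irrational factor $\alpha$ (odd powers) and through $\alpha^2$ and $\beta$, which are polynomial in $y$. Clearing the denominator $F=\alpha+\beta$ by multiplying through by a suitable power of $F$, I would regroup the whole expression as $A(y)+\alpha\,B(y)$, where $A$ and $B$ become polynomial in $y$. Because $\alpha$ is irrational over the field of rational functions of $y$, the equation forces $A=0$ and $B=0$ separately. This standard and decisive step converts one curvature identity into a finite system of tensor equations.

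The rational part $A=0$ is expected to reproduce the first identity of the theorem: matching the $\alpha^2$--coefficient and the $\beta$--dependent terms of $^\propto{Ric}$ against $({t^m}_m+(n-1)c)\alpha^2+2t_{00}+(n-1)[\rho_{0;0}-\rho_0^2+c\beta^2]$, once the contributions of $t_{ij}$ and of $\bar S^2=(\frac{r_{00}+2\beta s_0}{2F}-(s_0+\rho_0))^2$ are fully accounted for. The irrational part $B=0$ should, after simplification, split into two pieces: the divergence coming from $\bar S_{|m}y^m$ yields the second identity ${S^m}_{0;m}=-(n-1)(\rho_m{s^m}_0-c\beta)$, while the remaining cross terms between $\frac{r_{00}+2\beta s_0}{2F}$ and $(s_0+\rho_0)$ collect into a product that is forced to vanish.

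That last product is where I expect the main obstacle to lie. The disjunction in the third identity, $s_0=0$ or $r_{00}+2\beta s_0=0$, is precisely the statement that a product of the form $s_0\,(r_{00}+2\beta s_0)$, multiplied by a nonvanishing factor, must be zero; isolating this factorization cleanly from the tangle of $\alpha$--homogeneous terms, and checking by homogeneity and degree bookkeeping in $y$ that no other algebraic branch survives, is the delicate computational core. Once the three equations are extracted, the converse is routine: assuming the three identities of Theorem \ref{thm 1-1}, I would reverse the substitutions, verify that $A$ and $B$ vanish identically, and conclude $PRic=(n-1)cF^2$, so that $F$ has isotropic projective Ricci curvature.
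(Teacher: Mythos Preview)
Your strategy—clearing denominators and separating the resulting expression into parts rational and irrational in $\alpha$—is exactly the route the paper follows, though the paper saves itself the preliminary computation by starting directly from the Cheng--Shen formula \eqref{7} for $PRic$ of a Randers metric rather than reassembling it from the $S$-curvature and Ricci expansions. One small correction to your expected allocation: in the paper both the first identity for $^\alpha{Ric}$ \emph{and} the divergence identity for ${s^m}_{0;m}$ come out of the even-power (rational) part, via a coprimality argument between $\alpha^2$ and $\beta^2$ that pins down the scalar $\lambda={t^m}_m+(n-1)c$; only the factorization $s_0(r_{00}+2\beta s_0)=0$ is extracted from the odd-power (irrational) part.
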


\begin{theorem}\label{thm 1-2}
	Let $F=\alpha+\beta$  be a Randers metric on a manifold $M$. Then, $F$ is of a reversible projective Ricci curvature if and only if it is of a square projective Ricci curvature.
\end{theorem}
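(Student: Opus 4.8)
The plan is to deduce Theorem~\ref{thm 1-2} from the classification in Theorem~\ref{thm 1-1} by writing the projective Ricci curvature of $F=\alpha+\beta$ explicitly and splitting it into its reversible (even in $y$) and non-reversible (odd in $y$) parts. The input is the representation $PRic=Ric+(n-1)\bigl(\bar S_{|m}y^m+\bar S^2\bigr)$ together with the Randers $S$-curvature formula $S=(n+1)\bigl(\tfrac{e_{00}}{2F}-(s_0+\rho_0)\bigr)$, where $e_{00}=r_{00}+2\beta s_0$. Here $e_{00}$ and $r_{00}$ are even in $y$ while $s_0,\rho_0,\beta$ are odd, so that the parity of $PRic$ is governed by the cross terms carrying $\alpha\beta$, $\alpha s_0$ and $\beta e_{00}$. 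Both of the notions in the statement are instances of isotropic projective Ricci curvature ($PRic=(n-1)cF^2$), so Theorem~\ref{thm 1-1} already supplies the three governing relations; the task is to show that the additional structure hidden in each notion selects the same branch of the alternative ``$s_0=0$ or $r_{00}+2\beta s_0=0$''.

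First I would substitute $\bar S=\tfrac{1}{n+1}S$ into $PRic$, expand $\bar S^2$ and $\bar S_{|m}y^m$, and rationalize the resulting $1/F$ and $1/F^2$ factors through $\tfrac1F=\tfrac{\alpha-\beta}{\alpha^2-\beta^2}$, collecting everything into the normal form $PRic=\Phi+\alpha\Psi$ with $\Phi,\Psi$ rational in $y$; then $\Phi$ is the reversible part and $\alpha\Psi$ the non-reversible part. Matching $PRic=(n-1)cF^2$ by equating the rational and the $\alpha$-proportional pieces reproduces exactly the three relations of Theorem~\ref{thm 1-1}: the even (degree-two) identity yields the formula for ${}^{\propto}Ric$, the odd (degree-one) identity yields the equation for ${S^m}_{0;m}$, and the compatibility of the surviving cross terms yields the dichotomy $s_0=0$ or $r_{00}+2\beta s_0=0$. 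I would then translate ``$F$ is of reversible projective Ricci curvature'' into the collapse of the non-reversible part $\alpha\Psi$ and show, using the odd identity, that this forces one branch of the dichotomy; and translate ``$F$ is of square projective Ricci curvature'' into the requirement that $\Phi$ take the perfect-square normal form and show this forces the same branch. Matching the two selections is the content of Theorem~\ref{thm 1-2}.

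The step I expect to be the main obstacle is the honest computation of the non-reversible part $\alpha\Psi$. The terms $\bar S^2$ and especially $\bar S_{|m}y^m$ produce $1/F$ and $1/F^2$ factors whose rationalization mixes parities, and the horizontal derivative requires commuting the Berwald covariant derivative past these factors and re-expressing the outcome through $r_{ij},s_{ij},\rho_i$ and their covariant derivatives $r_{ij;k},s_{ij;k}$. Isolating the genuinely odd contributions---those proportional to $\alpha\beta$, $\alpha s_0$ and $\beta e_{00}$---and verifying that their vanishing coincides with the branch of the dichotomy that also produces the square form is where the bookkeeping is heaviest and where the equivalence is actually secured.
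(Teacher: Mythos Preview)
Your proposal rests on a misreading of the two notions in the statement. Neither ``reversible projective Ricci curvature'' nor ``square projective Ricci curvature'' is an instance of isotropic projective Ricci curvature, so Theorem~\ref{thm 1-1} does not apply and cannot supply the governing relations you want to quote. Reversible means $PRic(x,y)=PRic(x,-y)$, while \emph{square} (also called \emph{quadratic}) means that $PRic$ is a quadratic form in $y$, i.e.\ $PRic_{\cdot j\cdot k\cdot l}=0$; it does not mean that $PRic$ is a perfect square. Note that isotropic $PRic=(n-1)cF^2=(n-1)c(\alpha^2+2\alpha\beta+\beta^2)$ contains the odd term $2\alpha\beta$ and is therefore \emph{not} reversible unless $c=0$, so your claim that both notions are special cases of isotropy is false on its face.

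Once the definitions are corrected, the argument is much shorter than what you outline, and it does not require going back to the raw $S$-curvature formula or rationalizing $1/F$, $1/F^2$. The paper starts directly from the explicit Randers expression \eqref{7} for $PRic$. Imposing $PRic(y)=PRic(-y)$ kills precisely the terms carrying an odd power of $\alpha$ (recall $\alpha(-y)=\alpha(y)$, $\beta(-y)=-\beta(y)$); after clearing $F^2$ and separating rational from irrational parts one gets two equations that yield ${s^m}_{0;m}=-(n-1)\rho_m s^m{}_0$ and $s_0(r_{00}+2\beta s_0)=0$. Substituting these back into \eqref{7} leaves
\[
PRic={}^{\alpha}Ric-2t_{00}-\alpha^2 t^m{}_m+(n-1)\{-\rho_{0;0}+\rho_0^2\},
\]
which is manifestly quadratic in $y$; hence reversible implies square. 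The converse is immediate because any quadratic form in $y$ is even in $y$. Your even/odd splitting instinct is correct, but it should be applied to \eqref{7} directly, and the interpretation of ``square'' must be fixed before the equivalence can even be formulated.
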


\begin{corollary}\label{cor 1-3}
	If $F=\alpha+\beta$  is a Randers metric of an isotropic square projective Ricci curvature, then $F$ is Riemannian.
\end{corollary}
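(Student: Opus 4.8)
The plan is to obtain Corollary~\ref{cor 1-3} as an immediate consequence of Theorem~\ref{thm 1-2} and Definition~\ref{def}, with a short parity (reversibility) argument in the middle. First I would use Theorem~\ref{thm 1-2}: since $F$ is assumed to be of square projective Ricci curvature, it is simultaneously of reversible projective Ricci curvature; that is, $PRic$ is an even function of $y$, namely $PRic(x,-y)=PRic(x,y)$. Because $F$ is also of isotropic projective Ricci curvature, Definition~\ref{def} gives
\[
PRic=(n-1)\,c\,F^2=(n-1)\,c\,(\alpha+\beta)^2,\qquad c=c(x).
\]

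The key step is a parity computation based on the fact that, under $y\mapsto -y$, the Riemannian norm is even while the $1$-form is odd, i.e. $\alpha(x,-y)=\alpha(x,y)$ and $\beta(x,-y)=-\beta(x,y)$. Hence $PRic(x,-y)=(n-1)c(\alpha-\beta)^2$, and subtracting from the expression above gives
\[
0=PRic(x,y)-PRic(x,-y)=4(n-1)\,c\,\alpha\beta .
\]
Since $\alpha>0$ away from the zero section, this forces the pointwise identity $c\,\beta=0$, i.e. $c(x)\,b_i(x)=0$ for all $i$. Consequently, at every point where $c(x)\neq 0$ we have $\beta=0$, so that $F=\alpha$ is Riemannian there.

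What I expect to be the main obstacle is the degenerate locus $\{x:\ c(x)=0\}$: there $PRic\equiv 0$, the reversibility condition becomes vacuous, and the parity argument yields nothing. To remove this gap I would feed the hypothesis back into the characterization of Theorem~\ref{thm 1-1}. On $\{c=0\}$ the first two displayed identities of that theorem lose their $c$-terms, and the remaining dichotomy $s_0=0$ or $r_{00}+2\beta s_0=0$, together with the forced vanishing of the irrational (odd-in-$y$) part of the $S$-curvature, should be shown to imply $\beta=0$ as well; alternatively one may invoke the classification of projective Ricci flat Randers metrics of Cheng et al.~\cite{2} to exclude a non-Riemannian $\beta$ on this set. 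A continuity argument across the boundary of $\{c\neq 0\}$ then propagates $\beta\equiv 0$ to all of a connected $M$, completing the proof. The whole argument is therefore short modulo this flat-locus analysis, which is where the genuine work lies.
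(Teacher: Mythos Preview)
The paper gives no written proof of Corollary~\ref{cor 1-3}; it is simply recorded after Theorems~\ref{thm 1-1} and~\ref{thm 1-2} as an immediate consequence. The argument the authors have in mind is exactly your parity step: by Theorem~\ref{thm 1-2}, square is equivalent to reversible, and $PRic(y)-PRic(-y)=4(n-1)c\,\alpha\beta=0$ forces $c\beta\equiv 0$. So on the non-degenerate part your proposal coincides with the paper's intended reasoning.

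Where you go further than the paper is in flagging the locus $\{c=0\}$, and here there is a genuine problem that your suggested fixes do not repair. On $\{c=0\}$ the metric is projective Ricci flat; the conditions you would import from Theorem~\ref{thm 1-1} (equivalently Corollary~\ref{Cor 2-1}) read ${}^{\alpha}Ric=t^{m}{}_{m}\alpha^{2}+2t_{00}+(n-1)(\rho_{0;0}-\rho_{0}^{2})$, $s^{m}{}_{0;m}=-(n-1)\rho_{m}s^{m}{}_{0}$, and $s_{0}=0$ or $r_{00}+2\beta s_{0}=0$. These do \emph{not} force $\beta=0$: take $\alpha$ Ricci-flat (for instance the flat Euclidean metric) and $\beta$ a nonzero parallel $1$-form. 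Then $r_{ij}=s_{ij}=0$ and $\rho_{i}=0$, all three conditions hold trivially, $PRic\equiv 0$ is both isotropic (with $c\equiv 0$) and square, yet $F=\alpha+\beta$ is not Riemannian. The Cheng--Shen--Ma classification~\cite{2} yields the same characterisation and cannot exclude such $\beta$, and your continuity argument has no traction because $c$ may vanish on an open set. There is also no ``forced vanishing of the irrational part of the $S$-curvature'' available here: the square hypothesis has already been fully spent in obtaining $c\beta=0$, and on $\{c=0\}$ it imposes nothing further. In short, the obstacle you isolate is a genuine gap in the corollary as stated rather than a removable one; the conclusion requires the additional hypothesis that $c$ is nowhere zero, which the paper leaves implicit.
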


We prove Theorem \ref{thm 1-1} in Section \ref{sec 3} and also Theorem \ref{thm 1-2} in Section \ref{sec 4}.

For more information about Finsler Geometry refer to \cite{7,8,9}, and for Randers metric see \cite{10,11,12}.

\section{Preliminaries}
\begin{definition}
	Let $M$ be a differentiable manifold. Then, a Finsler structure on $M$  is a $F:TM \rightarrow [0, \infty)$  mapping which satisfies the following conditions:
	\begin{itemize}
		\item[1.] 	$F$ is smooth on  $TM_0=TM-\{0\}$;
		\item[2.]  	$F$ has positive homogeneity of first degree on $y$, namely for every  $\lambda > 0$,
		\begin{equation*}
		F(x,\lambda y)=\lambda F(x,y);
		\end{equation*}
		\item[3.] 	for every $(x,y)$ from the following matrix $TM$ known as Hessian matrix, it is positive definite:
		\begin{equation*}
		g_{ij}(x,y)=\frac{1}{2}\frac{\partial^2 F^2(x,y)}{\partial y^i\partial y^j},
		\end{equation*}
		that is, for  $X\neq 0$, we have $g(X, X)>0$,  where $g_{ij}$ are elements of tensor $g$. 
	\end{itemize}
	Therefore, $(M,F)$ is called Finsler manifold and $F$ is called Fundamental Finsler function.
\end{definition}

Let $F$ be a Finsler metric on a manifold $M$, then curve $c(t)$ is geodetic in the Finsler manifold $(M,F)$ if the following equation is hold
\begin{equation*}
\ddot{c}(t)+2G^i(c(t),\dot{c}(t))=0,~~~ i \in \{1,\cdots, n \},
\end{equation*}
where $G^i$ is a spray factor obtained from metric $F$ and is defined on $M$ as following
\begin{equation*}
G^i=\dfrac{1}{4}g^{il}\{[F^2]_{x^k y^l}y^k-[F^2]_{x^l}\},
\end{equation*}
where $(g^{ij}):=(g_{ij})^{-1}, y\in T_xM$.

The Riemann curvature is a family of linear mappings on the tangent space defined as follows
\begin{equation*}
R=\{R_y:T_pM\rightarrow T_p M\mid  y\in T_pM,p\in M\},
\end{equation*}
where $R_y$ can be locally expressed with respect to $T_p M$ space bases as follows
\begin{equation*}
R_y={R^i}_k\frac{\partial}{\partial x^i}\otimes dx^k.
\end{equation*}
So that ${R^i}_k={R^i}_k(x,y)$  represents the factors of a Riemann curvature $F$ and is defined by
\begin{equation*}
{R^i}_k=2\frac{\partial G^i}{\partial x^k}-y^j\frac{\partial^2 G^i}{\partial x^j\partial y^k}+2G^j\frac{\partial^2 G^i}{\partial y^j\partial y^k}-\frac{\partial G^i}{\partial y^j}\frac{\partial G^j}{\partial y^k}.
\end{equation*}

The Ricci curvature is the result of the Riemann curvature and is defined as follows
\begin{equation*}
Ric(x,y)={R^i}_i(x,y).
\end{equation*}
By definition, the Ricci curvature is a positive definite function of degree $2$ on $y$ \cite{3}.

In Finsler geometry, there are two important volume forms, one being the Busemann-Hausdorff volume form and the other being the Holmes-Thomson volume form. Suppose $\{e_i\}_{i=1}^n$  is an arbitrary basis for  $T_x M$ and  $\{\theta^i\}_{i=1}^n$ is a dual basis for  $T_x^*M$. Then, bounded open subset $B^n_x$  in $R^n$ is defined as
\begin{equation*}
B_x^n=\{(y^i)\in R^n\mid F(y^ie_i)<1\}.
\end{equation*}

Let $dV_F=\sigma_F(x)\theta^1\wedge\cdots\wedge\theta^n$  is an arbitrary volume form. In this case, supposing $\sigma_F (x)$ as 
\begin{equation*}
\sigma_F(x)=\frac{Vol(B^n(1))}{Vol({B^n}_x)},
\end{equation*}
results in a $dV_f$ volume form called Busemann-Hausdorff volume form where $Vol$ represent Euclidean volume and  $\omega_n$ is the Euclidean volume of unit sphere $B^n$ in $R^n$ \cite{4}. Considering this volume form, the notion of distortion is defined as
\begin{equation*}
\tau(x,y)=\ln\frac{\sqrt{\det(g_{ij}(x,y))}}{\sigma_{BH}(x)}.
\end{equation*}

Clearly, the notion of distortion has a positive homogeneity of zero degree. For a vector $y\in T_x M-\{0\}$, let $c=c(t)$  be geodesic under the conditions $c(0)=x$ and   $\dot{c}(0)=y$. Then, curvature $S$ is defined as
\begin{equation*}
S(x,y)=\frac{d}{dt}[\tau (c(t),\dot{c}(t))]\mid t=0.
\end{equation*}

According to the above definition, curvature $S$, in fact, is a restriction of derivative of $\tau$ on the geodesics, i.e.,
\begin{equation*}
S(x,y)=\tau_{\mid 1}(x,y)y^1.
\end{equation*}

It can be clearly seen that the curvature $S$ has a homogeneity of first degree. That is, for every  $\lambda>0$,
\begin{equation*}
S(x,\lambda y)=\lambda S(x,y).
\end{equation*}
On the other hand, in local coordinates we have
\begin{equation*}
S(x,y)=y^i\frac{\partial\tau}{\partial x^i}-2\frac{\partial\tau}{\partial y^i}G^i.
\end{equation*}
Given this equation, another equation can be obtained for the curvature $S$ as follows
\begin{equation*}
S(x,y)=\frac{\partial G^m}{\partial y^m}-y^m\frac{\partial}{\partial x^m}[\ln\sigma_{BH}].
\end{equation*}

\section{Isotropic projective Ricci curvature}\label{sec 3}
Cheng et al. \cite{2} described a Randers metrics of projective Ricci curvature  as
\begin{align}\label{7}
\aligned
PRic&=~ ^\alpha{Ric}+2\alpha {s^m}_{0;m}-2t_{00}-\alpha^2 {t^m}_m\\
&\quad+(n-1)\{-\frac{2\alpha \beta}{F^2}s_0^2+2\alpha(\rho_m {s^m}_0)-\rho_{0;0}-\frac{\alpha}{F^2}r_{00}s_0+\rho_0^2\}.
\endaligned
\end{align}

In the following we prove Theorem \ref{thm 1-1}.

\begin{proof}[Proof of Theorem \ref{thm 1-1}]
	To prove the necessary condition of Theorem \ref{thm 1-1}, suppose $F$ is of the isotropic projective Ricci curvature. According to \eqref{7}, it results
	\begin{align*}\label{8}
	\aligned
	0&=~ ^\alpha{Ric}+2\alpha {s^m}_{0;m}-2t_{00}-\alpha^2 {t^m}_m\\
	&\quad+(n-1)\{-\frac{2\alpha \beta}{F^2}s_0^2+2\alpha(\rho_m {s^m}_0)-\rho_{0;0}-\frac{\alpha}{F^2}r_{00}s_0+\rho_0^2-F^2c\}.
	\endaligned
	\end{align*}
	Multiplying both sides of the equation in $F^2$ results in 
	\begin{align*} 
	\aligned
	0&=F^2~^{\alpha}Ric+2F^2\alpha {s^m}_{0;m}-2F^2t_{00}-\alpha^2F^2 {t^m}_m\\
	&\quad+(n-1)\{-2\alpha \beta s_0^2+2F^2\alpha(\rho_m {s^m}_0)-{F^2}\rho_{0;0}-\alpha r_{00}s_0+ F^2\rho_0^2-F^4 c\}.
	\endaligned
	\end{align*}
	The ration can be rewritten as 
	\begin{equation*}\label{10}
	E_4\alpha^4+E_3\alpha^3+E_2\alpha^2+E_1\alpha+E_0=0,
	\end{equation*}
	where
	\begin{equation}\label{11}
	E_4=-{t^m}_m-(n-1)c,
	\end{equation}
	\begin{equation*}\label{12}
	E_3=2\{{s^m}_{0;m}-\beta {t^m}_m+(n-1)(\rho_m {s^m}_0-2c\beta)\},
	\end{equation*}
	\begin{align}\label{13}
	\aligned
	E_2&=~^\alpha{Ric}+4\beta {s^m}_{0;m}-2t_{00}-\beta^2 {t^m}_m+4(n-1)\beta(\rho_m{s^m}_0)\\&\quad-(n-1)\rho_{0;0}+(n-1)\rho^2_0-6(n-1)c\beta^2,
	\endaligned
	\end{align}
	\begin{align*}\label{14}
	\aligned
	E_1&=2\beta~^\alpha Ric+2\beta^2 {s^m}_{0;m}-4\beta t_{00}-2(n-1)\beta s^2_0+2(n-1)\beta^2(\rho_m {s^m}_0)\\&\quad -2(n-1)\beta\rho_{0;0}+2(n-1)\beta\rho^2_0-(n-1)r_{00}s_0-4(n-1)c\beta^3,
	\endaligned
	\end{align*}
	\begin{equation*}\label{15}
	E_0=(~^\alpha{Ric}-2t_{00}-(n-1)\rho_{0;0}+(n-1)\rho_{0}^{2}-(n-1)c\beta^2)\beta^2.
	\end{equation*}
	From \eqref{11}, we have
	\begin{equation}\label{16}
	E_4\alpha^4+E_2\alpha^2+E_0=0,
	\end{equation}
	\begin{equation}\label{17}
	E_3\alpha^2+E_1=0.
	\end{equation}
	From \eqref{16}, we get 
	\begin{equation}\label{18}
	(E_4\alpha^2+E_2)\alpha^2+E_0=0.
	\end{equation}
	Since  $\alpha^2$ and $\beta^2$  are mutual prime polynomials, from equation \eqref{18} and using $E_0$, it can be said that there is a scalar function $\lambda$ on $M$ such that
	\begin{equation}\label{19}
	^\alpha{Ric}-2t_{00}-(n-1)\rho_{0;0}+(n-1)\rho_0^2-(n-1)c\beta^2=\lambda(x)\alpha^2.
	\end{equation}
	By substituting \eqref{19} in \eqref{18} we have
	\begin{equation}\label{20}
	E_4\alpha^2+E_2+\lambda(x)\beta^2=0.
	\end{equation}
	We also have from \eqref{13}
	\begin{equation}\label{21}
	E_2=\lambda(x)\alpha^2+4\beta {s^m}_{0;m}-\beta^2{t^m}_m+4(n-1)\beta(\rho_m {s^m}_0)-5(n-1)c\beta^2.
	\end{equation}
	We rewrite \eqref{19} as \eqref{22}
	\begin{equation}\label{22}
	^\alpha{Ric}=\lambda(x)\alpha^2+2t_{00}+(n-1)[\rho_{0;0}-\rho_0^2+c\beta^2].
	\end{equation}
	By substituting \eqref{21} in \eqref{20} and using \eqref{11} we have
	\begin{equation*}\label{23}
	[\lambda-{t^m}_m-(n-1)c](\alpha^2+\beta^2)=-4\beta[{s^m}_{0;m}+(n-1)(\rho_m{s^m}_0)-(n-1)c\beta].
	\end{equation*}
	As a result, we have the following equations from above equation
	\begin{equation}\label{24}
	\lambda={t^m}_m+(n-1)c,
	\end{equation}
	\begin{equation}\label{25}
	{s^m}_{0;m}=-(n-1)(\rho_m{s^m}_0-c\beta).
	\end{equation}
	In addition, using the \eqref{22}, \eqref{24} and \eqref{25} we have
	\begin{equation*}\label{26}
	E_1=2[{t^m}_m+(n-1)c]\alpha^2\beta-(n-1)s_0(r_{00}+2\beta s_0),
	\end{equation*}
	\begin{equation*}\label{27}
	E_3=-2\beta[{t^m}_m+(n-1)c].
	\end{equation*}
	As a result, we get from \eqref{17}
	\begin{equation*}\label{28}
	s_0(r_{00}+2\beta s_0)=0,
	\end{equation*}
	And it results from the equation that $S_0=0$ or $r_{00}+2\beta s_0=0$.
	
	The proof of the sufficiency condition is obvious because by substituting three existing conditions from Theorem \ref{thm 1-1} in \eqref{7}, we get
	\begin{equation*}
	PRic=(n-1)cF^2.
	\end{equation*}
	Then, $F$ can be said to be of a isotropic projective Ricci curvature.
\end{proof}

\begin{corollary}\label{Cor 2-1}
	Let $F=\alpha+\beta$  be a Randers metric on a manifold $M$, then $F$ is a projective Ricci flat curvature if and only if the following equations exist
	\begin{equation*}
	^\alpha{Ric}={t^m}_m\alpha^2+2t_{00}+(n-1)[\rho_{0;0}-\rho_{0}^2],
	\end{equation*}
	\begin{equation*}
	{s^m}_{0;m}=-(n-1)(\rho_m {s^m}_0),
	\end{equation*}
	\begin{equation*}
	s_0=0\quad or\quad r_{00}+2\beta s_0=0.
	\end{equation*}
\end{corollary}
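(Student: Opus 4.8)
The plan is to recognize that projective Ricci flat curvature is nothing other than the special case $c \equiv 0$ of the isotropic projective Ricci condition treated in Theorem \ref{thm 1-1}. By Definition \ref{def}, a Randers metric $F$ has isotropic projective Ricci curvature when $PRic = (n-1)cF^2$, and a metric is projective Ricci flat precisely when $PRic = 0$. Writing $0 = (n-1)\cdot 0 \cdot F^2$, I see that $F$ is projective Ricci flat if and only if $F$ is of isotropic projective Ricci curvature with the scalar function $c$ identically zero. This places the corollary squarely inside the hypothesis of the already-proved Theorem \ref{thm 1-1}, so the strategy is simply to specialize that theorem to $c = 0$.

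First I would apply Theorem \ref{thm 1-1} with $c = 0$ and read off each of its three characterizing equations. Setting $c = 0$ in the first equation $^\alpha{Ric} = ({t^m}_m + (n-1)c)\alpha^2 + 2t_{00} + (n-1)[\rho_{0;0} - \rho_0^2 + c\beta^2]$ deletes the two terms carrying a factor of $c$, namely $(n-1)c\,\alpha^2$ and $(n-1)c\,\beta^2$, leaving exactly $^\alpha{Ric} = {t^m}_m\alpha^2 + 2t_{00} + (n-1)[\rho_{0;0} - \rho_0^2]$. Next, substituting $c = 0$ into ${s^m}_{0;m} = -(n-1)(\rho_m{s^m}_0 - c\beta)$ removes the $(n-1)c\beta$ contribution and yields ${s^m}_{0;m} = -(n-1)\rho_m{s^m}_0$. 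The third condition, $s_0 = 0$ or $r_{00} + 2\beta s_0 = 0$, does not involve $c$ at all and is therefore carried over unchanged. These three specialized equations are precisely the conclusion of the corollary, and since Theorem \ref{thm 1-1} is stated as an equivalence, the reverse implication is obtained by running the same substitution $c = 0$ backwards through \eqref{7}.

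Because the corollary is a direct specialization of an equivalence that has already been established, there is no genuine analytic obstacle here: all of the real work—reducing the Randers formula \eqref{7} to a polynomial identity in $\alpha$, separating its even and odd parts as in \eqref{16}--\eqref{17}, and using the coprimality of $\alpha^2$ and $\beta^2$—was carried out in the proof of Theorem \ref{thm 1-1}. The only thing that must be verified is the bookkeeping of setting $c = 0$ in each of the three equations, which I have indicated above; no new curvature computation is required. If anything, the one point worth stating explicitly is that $c \equiv 0$ is an admissible choice of scalar function on $M$, so that the projective Ricci flat case is a legitimate instance of Definition \ref{def} rather than a degenerate one.
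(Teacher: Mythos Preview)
Your proposal is correct and matches the paper's approach: the corollary is stated immediately after the proof of Theorem~\ref{thm 1-1} with no separate proof, so the paper clearly intends it as the specialization $c\equiv 0$ of that theorem, exactly as you describe. Your bookkeeping of the three equations under $c=0$ is accurate.
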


\section{Randers metrics of reversible projective Ricci carvature}\label{sec 4}
In this section we rewrite Theorem \ref{thm 1-2} and prove it.

Assume $F=\alpha+\beta$ is of reversible projective Ricci curvature, that is
\begin{equation*}
PRic(y)=PRic(-y).
\end{equation*}
Then the following theorem can be stated.
\begin{theorem}\label{thm 3-1}
	Let $F=\alpha+\beta$ be a Randers metric on a manifold $M$. Then, $F$ is a reversible projective Ricci curvature if and only if
	\begin{equation}\label{29}
	{s^m}_{0;m}=-(n-1)(\rho_m {s^m}_0),
	\end{equation}
	\begin{equation}\label{30}
	r_{00}+2\beta s_0=0,\quad or\quad s_0=0.
	\end{equation}
	In this case, $F$ is of a square projective Ricci curvature.
\end{theorem}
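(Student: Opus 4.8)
The plan is to work directly from the Randers formula \eqref{7} for $PRic$ and to read reversibility $PRic(x,y)=PRic(x,-y)$ as the vanishing of the odd part of $PRic$ in $y$. First I would record how each ingredient behaves under $y\mapsto -y$: the Riemannian Ricci $^{\alpha}Ric$, the scalars $t_{00}$, ${t^m}_m$, $\rho_{0;0}$, $\rho_0^2$ and $\alpha$ itself are even, whereas $\beta$, $s_0$, ${s^m}_{0;m}$ and $\rho_m{s^m}_0$ are odd. The only delicate terms are those carrying $1/F^2$; for these I would use $F^2\mapsto(\alpha-\beta)^2$ together with the identity $1/F^2=(\alpha-\beta)^2/(\alpha^2-\beta^2)^2$, which turns them into genuine rational expressions in $y$ with polynomial numerator and the even denominator $(\alpha^2-\beta^2)^2$.

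Computing $PRic(x,y)-PRic(x,-y)$ then kills every even contribution and, after factoring out the common $2\alpha\neq 0$, leaves
\begin{equation*}
2\big[{s^m}_{0;m}+(n-1)\rho_m{s^m}_0\big]=(n-1)\,s_0\,(r_{00}+2\beta s_0)\,\frac{\alpha^2+\beta^2}{(\alpha^2-\beta^2)^2}.
\end{equation*}
Clearing the denominator gives the polynomial identity in $y$
\begin{equation*}
2\big[{s^m}_{0;m}+(n-1)\rho_m{s^m}_0\big](\alpha^2-\beta^2)^2=(n-1)\,s_0\,(r_{00}+2\beta s_0)\,(\alpha^2+\beta^2),
\end{equation*}
whose two sides are both homogeneous of degree $5$. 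This identity is the heart of the argument, and extracting \eqref{29}–\eqref{30} from it is the step I expect to be the main obstacle. The key is a divisibility analysis: the quadratic forms $\alpha^2-\beta^2=(a_{ij}-b_ib_j)y^iy^j$ and $\alpha^2+\beta^2=(a_{ij}+b_ib_j)y^iy^j$ are both positive definite (using $b=\lVert\beta_x\rVert_\alpha<1$), and they are coprime as polynomials, since any common factor would divide their sum $2\alpha^2$ and difference $2\beta^2$, hence both $\alpha^2$ and $\beta^2$ — impossible, because $\beta^2$ is the square of the real linear form $b_iy^i$ while the positive-definite form $\alpha^2$ has no real linear factor. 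Consequently $(\alpha^2-\beta^2)^2$ must divide $s_0(r_{00}+2\beta s_0)$; but the latter has degree $3<4=\deg(\alpha^2-\beta^2)^2$, so it vanishes identically. Since the polynomial ring is an integral domain, this forces $s_0=0$ or $r_{00}+2\beta s_0=0$, which is \eqref{30}; feeding this back, the right-hand side is zero and therefore ${s^m}_{0;m}+(n-1)\rho_m{s^m}_0=0$, which is \eqref{29}.

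For the converse and the final assertion I would substitute \eqref{29} and \eqref{30} back into \eqref{7}. The grouping $2\alpha\big[{s^m}_{0;m}+(n-1)\rho_m{s^m}_0\big]$ vanishes by \eqref{29}, and the combination $-(n-1)\tfrac{\alpha}{F^2}s_0(r_{00}+2\beta s_0)$ vanishes by \eqref{30}, leaving
\begin{equation*}
PRic={}^{\alpha}Ric-2t_{00}-{t^m}_m\alpha^2-(n-1)\rho_{0;0}+(n-1)\rho_0^2,
\end{equation*}
a homogeneous quadratic polynomial in $y$. A quadratic form is even, so this $PRic$ is automatically reversible, which closes the equivalence; and since all the irrational $\alpha\beta/F^2$ contributions have disappeared and $PRic$ is now a genuine square (degree-two polynomial) in $y$, $F$ is of square projective Ricci curvature, as claimed. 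I would finally remark that \eqref{29}–\eqref{30} are precisely the last two conditions of the projective Ricci flat case (Corollary \ref{Cor 2-1}), which serves as a consistency check on the computation.
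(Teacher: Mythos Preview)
Your proof is correct. Both you and the paper isolate the odd-in-$y$ part of \eqref{7} and reduce it to polynomial constraints, but the extraction mechanisms differ. The paper multiplies through by $F^2$, expands $F^2=\alpha^2+2\alpha\beta+\beta^2$, and then invokes the standard Randers device of separating the resulting expression into its rational and irrational parts with respect to $\alpha$: writing the relation as $N_3\alpha^2+N_2\alpha+N_1=0$ with polynomial $N_i$, one sets $N_2=0$ and $N_3\alpha^2+N_1=0$ independently and reads off \eqref{29} and \eqref{30} from these two pieces. You instead compute the honest odd part, bring the $1/F^2$ contributions over the common even denominator $(\alpha^2-\beta^2)^2$, and finish with a coprimality-plus-degree argument between the positive-definite forms $\alpha^2\pm\beta^2$. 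Your route treats the $1/F^2$ terms more carefully---you never implicitly regard $F^2$ as even under $y\mapsto -y$---and replaces the $\alpha$-irrationality trick by a clean UFD argument; the paper's method, on the other hand, is somewhat quicker once the $N_i$ are written down, since $N_2=0$ yields \eqref{29} in one line without any divisibility analysis. The converse and the ``square'' conclusion are handled identically in both proofs.
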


\begin{proof}
	Let $F$ be of a reversible projective Ricci curvature, then using the \eqref{7} and
	\begin{equation*}
	PRic(y)=PRic(-y),
	\end{equation*}
	it results
	\begin{equation*}\label{31}
	4F^2\alpha {s^m}_{0;m}+2(n-1)\{-2\alpha\beta s_0^2+2F^2\alpha(\rho_m {s^m}_0)-\alpha r_{00}s_0\}=0,
	\end{equation*}
	which is equivalent to \eqref{32}
	\begin{equation}\label{32}
	N_3\alpha^3+N_2\alpha^2+N_1\alpha=0,
	\end{equation}
	where
	\begin{align*}
	N_3&=4{s^m}_{0;m}+4(n-1)(\rho_m {s^m}_0), \\
	N_2&=8\beta {s^m}_{0;m}+8(n-1)\beta(\rho_m {s^m}_0), \\
	N_1&=4\beta^2 {s^m}_{0;m}-4(n-1)\beta s_0^2+4(n-1)\beta^2(\rho_m {s^m}_0)-2(n-1)r_{00}s_0. 
	\end{align*}
	So from \eqref{32} we have
	\begin{equation}\label{36}
	N_3\alpha^2+N_1=0
	\end{equation}
	\begin{equation}\label{37}
	N_2=0.
	\end{equation}
	From \eqref{36}, we obtain \eqref{38}
	\begin{equation}\label{38}
	{s^m}_{0;m}=-(n-1)(\rho_m {s^m}_0).
	\end{equation}
	The above equation is equal to the \eqref{29}. By substituting \eqref{38} to \eqref{37} we have
	\begin{equation}\label{39}
	s_0(r_{00}+2\beta s_0)=0.
	\end{equation}
	Consequently $s_0=0$ or $r_{00}+2\beta s_0=0$.
	
	The proof of the sufficiency condition is trivial, because, by assuming that \eqref{29} and \eqref{30} are hold and by substituting equations \eqref{38} and \eqref{39} in \eqref{7} we will have
	\begin{equation*}\label{40}
	PRic=Ric-2t_{00}-\alpha^2 {t^m}_m+(n-1)\{-\rho_{0;0}+\rho_0^2\}.
	\end{equation*}
	It can be clearly seen from the above equation that $F$ is of a reversible projective Ricci curvature. In fact, $F$ is of a square projective Ricci curvature. In this way, the sufficiency condition is confirmed.
\end{proof}

Let Finsler metric $F$ be of the isotropic curvature S, that is
$$S=(n+1)cF,$$
where, $c=c(x)$ is a scalar function on the manifold $M$. Then, we have
\begin{equation*}
S_{\mid m}=(n+1)c_m F,
\end{equation*}
\begin{equation*}
PRic=Ric+(n-1)c_0 F+(n-1)c^2 F^2,
\end{equation*}
where, $c_m:=c_{x^m}$  and  $c_0:=c_m y^m$. In this case, $F$ is of the square projective Ricci curvature if and only if
\begin{equation}\label{41}
Ric_{.j.k.l}+(n-1)\{c_0 F_{y^j y^k y^l}+c^2F^2_{y^j y^k y^l}\}=0.
\end{equation}
Using this equation, Theorem \ref{thm 3-2} can be expressed.
\begin{theorem}\label{thm 3-2}
	Let  $F=\alpha+\beta$ be a Randers metric on a $n$-dimensional manifold $M$ and is of a square Ricci curvature, then $F$ is of a square projective Ricci curvature if and only if $S=0$.
\end{theorem}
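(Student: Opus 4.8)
The plan is to run everything through the two facts set up immediately before the statement. Under the isotropic $S$-curvature hypothesis $S=(n+1)cF$ that underlies \eqref{41} one has
\begin{equation*}
PRic=Ric+(n-1)c_0F+(n-1)c^2F^2,
\end{equation*}
and $F$ is of square projective Ricci curvature exactly when \eqref{41} holds. Since $F$ is assumed to be of square Ricci curvature, $Ric$ is a quadratic form in $y$, so $Ric_{.j.k.l}=0$ and \eqref{41} collapses to the single identity
\begin{equation*}
c_0\,F_{y^jy^ky^l}+c^2\,F^2_{y^jy^ky^l}=0.\tag{$\star$}
\end{equation*}
The task is then to deduce $c=0$ from $(\star)$; the reverse implication is immediate, since $S=0$ forces $c=0$ and reduces \eqref{41} to $Ric_{.j.k.l}=0$, which is precisely the square Ricci hypothesis.

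First I would contract $(\star)$ with $y^j$. The decisive point is a mismatch of homogeneity: $F_{y^ky^l}$ is positively homogeneous of degree $-1$ in $y$, whereas $F^2_{y^ky^l}=2g_{kl}$ is homogeneous of degree $0$. Euler's relation therefore gives $F_{y^jy^ky^l}y^j=-F_{y^ky^l}$ and $F^2_{y^jy^ky^l}y^j=0$, so the $c^2$-term disappears and $(\star)$ leaves $c_0F_{y^ky^l}=0$. Because $\beta$ is linear in $y$ we have $F_{y^ky^l}=\alpha_{y^ky^l}$, which is $\alpha^{-1}$ times the angular metric $a_{kl}-\alpha^{-2}a_{kp}a_{lq}y^py^q$ of $\alpha$, a nonzero tensor for $n\ge2$. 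Hence $c_0=c_my^m=0$ for every $y$, forcing $c_m=0$, i.e. $c$ is constant.

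With $c_0=0$, identity $(\star)$ becomes $c^2F^2_{y^jy^ky^l}=0$. Writing $F^2=\alpha^2+2\alpha\beta+\beta^2$ and discarding the quadratic terms $\alpha^2,\beta^2$ (whose third derivatives vanish) gives $F^2_{y^jy^ky^l}=2(\alpha\beta)_{y^jy^ky^l}$. If $\beta\equiv0$ then $F=\alpha$ is Riemannian and $S=0$ directly. If $\beta\not\equiv0$, then $\alpha\beta$ satisfies $(\alpha\beta)(-y)=-(\alpha\beta)(y)$, while any quadratic polynomial is invariant under $y\mapsto-y$; hence $\alpha\beta$ cannot be a quadratic polynomial and its third $y$-derivatives do not all vanish, forcing $c^2=0$. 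In either case $S=(n+1)cF=0$, which gives the forward direction, and with the trivial converse noted above the proof is complete.

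The step I expect to carry the whole argument is the contraction with $y^j$: it is exactly the degree mismatch between $F_{y^ky^l}$ and $F^2_{y^ky^l}$ that annihilates the curvature term $c^2F^2_{y^jy^ky^l}$ and isolates $c_0$. The two nonvanishing facts I then rely on — that the angular metric of $\alpha$ is nonzero and that $\alpha\beta$ is never a quadratic polynomial once $\beta\neq0$ — are precisely the statement that a Randers metric with nonzero $\beta$ is genuinely non-Riemannian, and this is where the $(\alpha,\beta)$ structure, rather than general Finsler data, enters the proof.
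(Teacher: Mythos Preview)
Your argument is correct, and it reaches the same conclusion as the paper but by a genuinely different mechanism. The paper proceeds by brute force: it writes out the explicit formulas \eqref{42}--\eqref{43} for $F_{y^jy^ky^l}$ and $F^2_{y^jy^ky^l}$, substitutes them together with $Ric_{.j.k.l}=0$ into \eqref{41}, and arrives at the combined identity \eqref{45}, from which it simply asserts that $c=0$ is forced. Your route avoids any explicit third-derivative formulas: the contraction with $y^j$ exploits the degree mismatch (Euler's relation kills the $c^2$ term because $g_{kl}$ is $0$-homogeneous) to isolate $c_0\alpha_{y^ky^l}=0$ and hence $c_m=0$; then the parity observation that $\alpha\beta$ is odd in $y$, hence never a quadratic form when $\beta\not\equiv0$, dispatches the remaining condition $c^2F^2_{y^jy^ky^l}=0$. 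What your approach buys is a clean separation of the two obstructions ($dc=0$ versus $c=0$) and a proof that would transplant to any $F$ whose non-quadratic part is odd in $y$; what the paper's approach buys is the concrete expression \eqref{45}, though it leaves the final ``clearly $c=0$'' step unargued --- a step your contraction-plus-parity argument actually supplies.
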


\begin{proof}
	The sufficiency condition is trivial. We prove the necessary condition. Suppose $F=\alpha+\beta$  is a Randers metric, then
	\begin{equation}\label{42}
	F_{y^jy^ky^l}=-\frac{1}{\alpha^3}\left[\delta_{jk}~y_{l}(j\rightarrow k\rightarrow l\rightarrow j)+\frac{3}{\alpha^2}y_l y_k y_j\right],
	\end{equation}
	and
	\begin{align}\label{43}
	\aligned
	F^2_{y^jy^ky^l}&=\frac{1}{\alpha}b_j\delta_{kl}(j\rightarrow k\rightarrow l\rightarrow j)-\frac{1}{\alpha^3}\beta\delta_{jk}y_l(j\rightarrow k\rightarrow l\rightarrow j)\\&\quad-\frac{3}{\alpha^5}\beta y_ly_ky_j-\frac{1}{\alpha^3}b_jy_ky_l(j\rightarrow k\rightarrow l\rightarrow j),
	\endaligned
	\end{align}
	where symbol $j\rightarrow k\rightarrow l\rightarrow j$  represents all of the rotations on the indices and then the summation of them. Suppose $F$ is a square Ricci curvature. Then, we have
	\begin{equation}\label{44}
	Ric_{.j.k.l}=0.
	\end{equation}
	By substituting \eqref{42}, \eqref{43} and \eqref{44} in \eqref{41} we have
	\begin{align}\label{45}
	\aligned
	0=(n-1)\Bigg\{&-\frac{1}{\alpha^3}\left[(c_0+\beta c^2)\delta_{jk}~ y_l(j\rightarrow k\rightarrow l\rightarrow j)+\frac{3}{\alpha^2}(c_0+\beta c^2)y_ly_ky_j\right]\\
	&+\frac{c^2}{\alpha}\left[\left(\delta_{kl}-\frac{1}{\alpha^2}y_ky_l\right)b_j\right](j\rightarrow k\rightarrow l \rightarrow j)\Bigg\}.
	\endaligned
	\end{align}
	It is clearly seen that \eqref{45} exists when $c=0$. Therefore $S=0$.
\end{proof}

\end{document}